\documentclass[a4paper,twoside]{amsart}
\usepackage[utf8]{inputenc}
\usepackage[hidelinks]{hyperref}
\usepackage{graphicx}
\usepackage{enumitem}
\usepackage{mathrsfs}
\usepackage{etoolbox}
\usepackage{amssymb}
\usepackage{xcolor}

\def\Aut{\mathop{\mathrm{Aut}}\nolimits}

\def\str#1{\mathbf {#1}}

\theoremstyle{plain}
\newtheorem{theorem}{Theorem}[section]

\newtheorem{prop}[theorem]{Proposition}
\newtheorem{observation}[theorem]{Observation}
\newtheorem{lemma}[theorem]{Lemma}

\theoremstyle{definition}
\newtheorem{definition}[theorem]{Definition}
\newtheorem{example}[theorem]{Example}
\newtheorem*{remark*}{Remark}
\newtheorem*{claim*}{Claim}
\theoremstyle{remark}

\def\str#1{\mathbf {#1}}
\def\Fraisse{Fra\"{\i}ss\' e}
\def\Aut{\mathop{\mathrm{Aut}}\nolimits}
\def\Emb{\mathop{\mathrm{Emb}}\nolimits}

\def\Monoid{{\mathfrak M}}
\def\EMonoid{\Monoid=(M,\oplus,\mleq,0)}
\def\mleq{\preceq}

\def\mgeq{\succeq}

\begin{document}
\title{Ramsey expansions of $S$-metric spaces}
\authors{
\author[J. Hubi\v cka]{Jan Hubi\v cka}
\address{Charles University, Faculty of Mathematics and Physics\\Department of Applied Mathematics (KAM)\\
Prague, Czech Republic}
\email{hubicka@kam.mff.cuni.cz}
\author[M. Kone\v cn\'y]{Mat\v ej Kone\v cn\'y}
\address{Charles University, Faculty of Mathematics and Physics\\Department of Applied Mathematics (KAM)\\
Prague, Czech Republic}
\email{matej@kam.mff.cuni.cz}
\author[J. Ne\v set\v ril]{Jaroslav Ne\v set\v ril}
\address{Charles University, Faculty of Mathematics and Physics\\
Computer Science Institute of Charles University (IUUK)\\
Prague, Czech Republic}
\email{nesetril@iuuk.mff.cuni.cz}
\author[N. Sauer]{Norbert Sauer}
\address{Mathematics \& Statistics Department\\
University of Calgary\\
Calgary, T2N 1N4, Alberta, Canada}
\email{nsauer@math.ucalgary.ca}

\thanks{J.H. and J.N\ were supported by project 25-15571S of the  Czech Science Foundation (GA\v CR). This paper is part of a project that has received funding from the European Research Council (ERC) under the European Union's Horizon 2020 research and innovation programme (grant agreement No 810115 -- {\sc Dynasnet}).
}

\dedicatory{In memory of our good friend Robert Woodrow}

\begin{abstract}
Let $S\subseteq \mathbb R^+$ be a subset of positive reals. We say that a metric space is an \emph{$S$-metric space} if the range of the metric is a subset of $S\cup \{0\}$. In 2007, Delhomm{\'e}, Laflamme, Pouzet, and Sauer characterized those sets $S$ for which the class of all finite $S$-metric spaces has the amalgamation property. Given an amalgamation class, a natural question is what its Ramsey expansion is. Ramsey expansions are known for classes of $S$-metric spaces for which $S\cup\{0\}$ carries a natural distance monoid structure, for example for $S=\mathbb Q^+$ or when $S$ is topologically closed. In this paper, we extend this result to all amalgamation classes of finite $S$-metric spaces, thereby completing the Ramsey-theoretic classification of amalgamation classes of finite $S$-metric spaces.
\end{abstract}
\maketitle

\section{Introduction}
A structure $\str M$ is \emph{homogeneous} if every isomorphism between finite substructures of $\str M$ extends to an automorphism of $\str M$. A class $\mathcal C$ of finite structures has the \emph{amalgamation property} if for every $\str A, \str B_1,\str B_2\in \mathcal C$ such that $\str A$ is a substructure of both $\str B_1$ and $\str B_2$, there is $\str C\in \mathcal C$ such that both $\str B_1$ and $\str B_2$ are substructures of $\str C$, sharing $\str A$ as a common substructure. The interplay between (infinite) homogeneous structures and classes with the amalgamation property is the subject of the famous \Fraisse{} theory~\cite{Fraisse1986}. In particular, the \emph{age} (class of all finite substructures) of every homogeneous structure has the amalgamation property, and, under certain conditions, classes with the amalgamation property yield infinite homogeneous structures.

It turns out that many group-theoretic or model-theoretic properties of homogeneous structures are reflected in combinatorial properties of their ages. One of the most prominent is the Ramsey property:

\begin{definition}\label{defn:ramsey}
A class $\mathcal C$ of finite structures is \emph{Ramsey} (has the \emph{Ramsey property}) if for every $\str{A}\in \mathcal C$ and $\str{B}\in\mathcal C$, there exists $\str{C}\in\mathcal C$ such that the following holds: For every colouring $c\colon \Emb(\str A, \str C)\to \{0,1\}$ of all embeddings of $\str A$ into $\str C$ by 2 colours, there exists an embedding $f\colon \str B\to \str C$ such that the colouring $c$ is constant on $f\circ \Emb(\str A, \str B) = \{f\circ g : g\in \Emb(\str A, \str B)\}$.
\end{definition}
It is usual to denote the last part of Definition~\ref{defn:ramsey} as $\str{C} \longrightarrow (\str{B})^{\str{A}}_2$.

Structural Ramsey theory emerged in the 1970s from work of Leeb~\cite{Leeb} and Ne\v set\v ril and R\"odl~\cite{Nevsetvril1977}, generalizing Ramsey's theorem~\cite{Ramsey1930} from finite sets to finite structures. The area was revitalized in 2005 by the influential paper of Kechris, Pestov and Todor\v cevi\'c~\cite{Kechris2005}, connecting Ramsey classes to topological dynamics and the study of universal minimal flows. Since then, many new Ramsey classes have been found and general machinery for proving the Ramsey property has been developed, see e.g.~\cite{Hubicka2016} or the recent survey~\cite{hubicka2025twenty}.

\medskip
 
Among classes with the amalgamation property that have been studied from the perspective of the Ramsey property, metric and metric-like spaces occupy a prominent place. Ne\v set\v ril~\cite{Nevsetvril2007} proved that the class of all finite linearly ordered metric spaces is Ramsey. The fact that one needs to add some variant of a linear order is a well-understood phenomenon in Ramsey theory, see e.g.~\cite{Kechris2005,Dellamonica2012}. This result served as a foundation for further research on Ramsey properties of metric and metric-like spaces, see e.g.~\cite{The2010,Dellamonica2012,Hubicka2016,Hubicka2017sauerconnant,Sam,Aranda2017c,Aranda2017,Konecny2018b}.

Delhomm{\'e}, Laflamme, Pouzet, and Sauer in 2007~\cite{Delhomme2007} characterized precisely for which subsets $S\subseteq \mathbb R^+$ the class of all finite metric spaces whose range of distances is a subset of $S\cup \{0\}$ (we will call such spaces \emph{$S$-metric spaces}) has the amalgamation property. It turns out that this can be neatly expressed by a simple condition on $S$ (which in fact says that amalgamations of triangles exist):
\begin{definition}[\cite{Delhomme2007}]
A subset $S\subseteq \mathbb R^+$ satisfies the \emph{4-values condition} if for every $a,b,c,d\in S$, whenever there is an element $x\in S$ such that the triangles with distances $(a,b,x)$ and $(c,d,x)$ satisfy the triangle inequality, there is $y\in S$ such that the triangles with distances $(a,c,y)$ and $(b,d,y)$ satisfy the triangle inequality.
\end{definition}
The characterization of Delhomm{\'e}, Laflamme, Pouzet, and Sauer~\cite{Delhomme2007} can now be stated as follows.
\begin{theorem}[\cite{Delhomme2007}]\label{thm:4values}
Let $S$ be a subset of positive real numbers and let $\mathcal M_S$ be the class of all finite $S$-metric spaces. Then $\mathcal M_S$ has the amalgamation property if and only if $S$ satisfies the 4-values condition.
\end{theorem}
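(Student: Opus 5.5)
Necessity is the easy direction. Suppose $\mathcal M_S$ has the amalgamation property and take $a,b,c,d,x\in S$ such that $(a,b,x)$ and $(c,d,x)$ are metric triangles. Build two triangles sharing a common edge of length $x$. Let $\str B_1$ have points $u,v,p$ with $d(u,v)=x$, $d(p,u)=a$, $d(p,v)=b$. Let $\str B_2$ have points $u,v,q$ with $d(q,u)=c$, $d(q,v)=d$. Both are in $\mathcal M_S$, and they share the substructure $\{u,v\}$. An amalgam either identifies $p$ with $q$ or assigns a distance $y=d(p,q)\in S$. In the second case the triangles $(p,q,u)$ and $(p,q,v)$ have sides $(a,c,y)$ and $(b,d,y)$, so $y$ witnesses the 4-values condition. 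In the first case we have $a=c$ and $b=d$, and we need a positive $y\in S$ with $y\le 2\min(a,b)$ and $y\ge |a-b|$. We can take $y=\max(a,b)$ if $a\ne b$, and $y=a$ if $a=b$. (One could also arrange the amalgam to be strong, so that $p\neq q$ by definition.)

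For sufficiency, assume the 4-values condition. The standard reduction is to one-point amalgamation: it suffices to amalgamate $\str B_1=\str A\cup\{p\}$ and $\str B_2=\str A\cup\{q\}$ over $\str A$, since general amalgamation follows by iterating one point at a time. So we must find $y\in S$ such that for every $z\in A$ the triple $(d(p,z),d(q,z),y)$ satisfies the triangle inequality. Put $a_z=d(p,z)$ and $b_z=d(q,z)$. Then the condition on $y$ is
\[
L:=\max_z |a_z-b_z|\le y\le U:=\min_z (a_z+b_z),
\]
and we need $y>0$ in $S$ in this interval.

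The crux is the $2$-point case. For fixed $z,w\in A$, consider the triangles $(a_z,a_w,x)$ and $(b_z,b_w,x)$ with $x=d(z,w)\in S$, both of which are metric. The 4-values condition then gives $y\in S$ with $|a_z-b_z|\le y\le a_z+b_z$ and $|a_w-b_w|\le y\le a_w+b_w$. So the interval constraints are \emph{pairwise} satisfiable in $S$.

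The main obstacle is passing from pairwise to global satisfiability. For intervals on the real line this follows from Helly's theorem: pairwise intersecting intervals have a common point. Here, however, we need a common point of $S$, and $I_z\cap I_w\cap S\neq\emptyset$ for all pairs does not trivially give $\bigcap_z I_z\cap S\neq\emptyset$. My plan is to pick $z_0$ attaining $L=|a_{z_0}-b_{z_0}|$ and $w_0$ attaining $U=a_{w_0}+b_{w_0}$. Then $\bigcap_z I_z=[L,U]=I_{z_0}\cap I_{w_0}$, which is exactly the case handled above when $z_0\neq w_0$. If $z_0=w_0$, the interval $I_{z_0}$ alone is the intersection, and $y=\max(a_{z_0},b_{z_0})$ lies in $S$ and in $I_{z_0}$. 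Indeed this $y$ is at least $|a_{z_0}-b_{z_0}|$ and at most $a_{z_0}+b_{z_0}$. Since $y\in[L,U]$, it works for every $z$. With this, one-point amalgamation holds, and by induction so does full amalgamation.
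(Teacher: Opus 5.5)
The paper does not prove this theorem. It is quoted from \cite{Delhomme2007}, with only the remark that the 4-values condition says that amalgamations of triangles exist. Your argument is the natural one for this result, and its overall structure is sound.

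For necessity, you amalgamate two triangles over a common edge of length $x$. For sufficiency, you reduce to one-point amalgamation. There the admissible values of the new distance form $\bigcap_z I_z$ with $I_z=[|a_z-b_z|,a_z+b_z]$. This intersection equals $I_{z_0}\cap I_{w_0}$, so the 4-values condition applied to the triangles through $z_0,w_0$ gives a valid $y$. When $z_0=w_0$, the choice $y=\max(a_{z_0},b_{z_0})$ does the job. Since $y\in S$ is positive, these one-point amalgams are strong, and the iteration to full amalgamation goes through.

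There is one wrong step, in the degenerate case of necessity where the amalgam identifies $p$ and $q$. Then $a=c$ and $b=d$, and the triangles required by the 4-values condition are $(a,c,y)=(a,a,y)$ and $(b,d,y)=(b,b,y)$. The only constraint is therefore $0<y\le 2\min(a,b)$; your extra requirement $y\ge |a-b|$ is spurious. Your choice $y=\max(a,b)$ can violate the actual constraint. For $a=c=1$, $b=d=3$, $x=3$, the triangle $(1,3,3)$ is metric, but $(1,1,3)$ is not. Take $y=\min(a,b)\in S$ instead, which clearly works.

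Your parenthetical claim that the amalgam could be arranged to be strong is also unjustified, since the amalgamation property does not a priori provide strong amalgams. With the corrected choice of $y$, it is not needed.
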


Nguyen Van Th\'e~\cite{NVT2009} proved that the Ramsey property holds for various convexly ordered $S$-metric spaces. His results were later significantly extended by Hubi\v cka and Ne\v set\v ril~\cite{Hubicka2016}, who covered all cases where $S\cup\{0\}$ carries a natural distance monoid structure (for example when $S=\mathbb Q^+$ or when $S$ is topologically closed).

In this paper, we conclude the study of Ramsey expansions for $S$-metric spaces by proving the following theorem.
\begin{theorem}\label{thm:main}
Let $S$ be a subset of positive real numbers satisfying the 4-values condition, and let $\overrightarrow{\mathcal M}_S$ be the class of all finite $S$-metric spaces equipped with a linear order convex with respect to block equivalences. Then $\overrightarrow{\mathcal M}_S$ has the Ramsey property and the expansion property with respect to $\mathcal M_S$.
\end{theorem}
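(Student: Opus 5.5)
We will reduce Theorem~\ref{thm:main} to the known case of distance monoids treated by Hubi\v cka and Ne\v set\v ril~\cite{Hubicka2016}, and more generally to the Hubi\v cka--Ne\v set\v ril framework for classes given by forbidden (non-completable) cycles. The plan is to associate with $S$ a suitable ordered structure on $S\cup\{0\}$: addition is replaced by a ``truncated sum'' $a\oplus b=\sup\{s\in S\cup\{0\}: s\le a+b\}$, or rather by the least element of a completion that still certifies the triangle inequality. We then show that the class of finite $S$-metric spaces coincides with the class of finite $\Monoid$-metric spaces for an appropriate partially ordered commutative monoid $\EMonoid$. Since $S$ need not be closed, the supremum may fail to lie in $S$, and the 4-values condition is exactly what should guarantee that the required operation is associative and monotone. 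Concretely, we will consider the closure-like set $\bar S$ of formal values: each $s\in S$ together with formal symbols $s^-$ for limits from below that are not attained. We define $\oplus$ on $\bar S\cup\{0\}$ and check associativity using the 4-values condition applied to pairs of triangles.

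The second step is to verify that, for this monoid, an $S$-edge-labelled graph (partial $S$-metric space) has a completion to an $S$-metric space if and only if it contains no non-metric cycle. Here ``non-metric'' means a cycle in which one edge exceeds the $\oplus$-sum of the others. The completion will be the shortest-path completion with respect to $\oplus$. We must check that it only uses values of $S$ itself (not formal values), or else round each formal value to an actual element of $S$ consistently. This is where the 4-values condition enters again, via amalgamation of triangles as in Theorem~\ref{thm:4values}.

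With this in hand, the class becomes a class of structures with finitely many forbidden ``cycle'' substructures up to each size. We then apply the Hubi\v cka--Ne\v set\v ril theorem for locally finite subclasses. Given finite $\str A,\str B$, only finitely many distances occur, so we may work in a finite sub-monoid generated under $\oplus$ by those distances. A Ramsey witness $\str C$ in the class of ordered edge-labelled graphs (from the Ne\v set\v ril--R\"odl theorem) is then post-processed: its non-completable cycles have bounded size, and the free-amalgamation/local finiteness argument ensures that $\str C$ can be chosen completable. If $\oplus$ is not archimedean, block equivalences appear, namely the relations $d(x,y)\preceq s$ closed under $\oplus$. As in~\cite{Hubicka2016}, the order must be convex with respect to these equivalences, and we handle this by first expanding by the quotient structure and applying the product argument.

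Finally, the expansion property for $\overrightarrow{\mathcal M}_S$ follows by the standard argument. Given $\str A$, build $\str B$ containing every convex ordering of $\str A$ compatible with the block structure, and use amalgamation to realise all orders. The main obstacle I expect is the first step: defining $\oplus$ when $S$ is neither closed nor dense in the relevant places, and proving associativity and that shortest-path completion never leaves $S$. I would first try to handle it by working separately with each finite set of distances. The 4-values condition would then be used to show that, for finite $T\subseteq S$, some finite $T'\supseteq T$ in $S$ is closed under the truncated operation, thereby reducing to the closed case.
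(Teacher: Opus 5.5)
There is a genuine gap. Your formal values $r^-$ amount to the paper's monoid $\Monoid$ of bounded, downward-closed, non-empty subsets of $S$, and deriving associativity from the 4-values condition is the right first step. The gap is the passage from $\Monoid$ back to $S$, which you flag as the main obstacle but never resolve.

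The finite $S$-metric spaces do not coincide with the finite $\Monoid$-metric spaces. They correspond only to those $\Monoid$-metric spaces all of whose distances are original. Your first option, checking that the shortest-path completion uses only values of $S$, is false. For $S=[1,2)$, which trivially satisfies the 4-values condition, completing a path with two edges of length $1$ gives $\hat 1\oplus\hat 1=S$, i.e.\ the formal value $2^-$. So everything rests on ``round each formal value consistently'', for which you give no argument.

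This is exactly the paper's key Lemma~\ref{lem:finite_palette}. For a finite set $\mathcal P$ of values there is $f\colon\mathcal P\to\widehat S$ such that $f(X)\subseteq X$, $f$ fixes original values, preserves every metric triangle within $\mathcal P$, and preserves $\approx$-blocks (so convexity of the order survives). It is proved by induction, replacing the \emph{largest} formal value $X$ by $\hat\alpha$ for a suitable $\alpha\in X$ close to $\sup X$. This works for three reasons:
\begin{enumerate}
\item an inclusion $X\oplus Y\supseteq\hat z$ with $\hat z$ original is always witnessed by a single element of $X$;
\item $\sup Y<\sup X$ for every $Y\subsetneq X$;
\item $\hat a\oplus\hat a\supseteq X$ for some $a\in X$.
\end{enumerate}
Handling the largest formal value first ensures that whenever $X$ sits on the right of a triangle inequality with a larger left-hand side, that left-hand side is original. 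This step uses only that $\sup X$ is not attained, not the 4-values condition.

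Your fallback, a finite $T'$ with $T\subseteq T'\subseteq S$ closed under the truncated operation, is only asserted, and even granting it you would need more. Blocks computed inside $T'$ can be finer than those of $S$. For $S=\mathbb R^+$ and $T'=\{1,3\}$, the set $T'$ satisfies the 4-values condition but has blocks $\{1\}$ and $\{3\}$. So $T'$-convexity is strictly stronger, and some members of $\overrightarrow{\mathcal M}_S$ fall outside the $T'$-class.

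Some auxiliary claims are also inaccurate:
\begin{itemize}
\item The submonoid generated by finitely many distances need not be finite: for $S=\mathbb Q^+$, $\hat 1$ generates $\hat 1,\hat 2,\hat 3,\dots$.
\item Non-completable cycles have bounded size only after the block expansion. For $S=\{1,3\}$, the cycles with one edge $3$ and all other edges $1$ are non-completable at every length.
\end{itemize}

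Rebuilding the Ramsey statement from Ne\v set\v ril--R\"odl and local finiteness is unnecessary. Theorem~\ref{thm:monoids} holds for every distance monoid, so you can take a witness $\str C'$ for $T(\str A)$, $T(\str B)$ and round it. You must then note that the rounded $\str C$ may contain new copies of $\str A$ through rounded edges. This is harmless: you restrict the colouring to the copies already present in $\str C'$, and the monochromatic copy of $\str B$ uses only original distances.
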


To prove Theorem~\ref{thm:main} we use the Ramsey theorem for metric spaces over distance monoids proved by Hubi\v cka, Kone\v cn\'y and Ne\v set\v ril~\cite{Hubicka2017sauerconnant}. Given a set $S$ satisfying the 4-values condition, we construct a distance monoid containing a copy of $S$ and translate $S$-metric spaces to metric spaces over this monoid. We then use the Ramsey theorem and show how to ``adjust'' the distances in a Ramsey witness to get an $S$-metric space.

\medskip

\paragraph{\textbf{AI usage disclaimer.}} The original version of this paper dates from 2020. ChatGPT (GPT-5.6 Sol) was used in the present revision for proof checking and editing; the authors take full responsibility for the final text.

\section{Preliminaries}
The structures considered in this paper are metric or metric-like spaces (that is, they have a vertex set and a distance function), possibly equipped with an order. Equivalently, these structures can be understood as relational structures in a binary language, with one binary symmetric relation for each non-zero distance (and, possibly, an order). The embeddings are (monotone) distance-preserving injections and correspond to the classical embeddings of relational structures.

\subsection{Ramsey expansions}\label{sec:expansions}
As we have already mentioned, classes with the amalgamation property are often not Ramsey by themselves. One then adds some extra structure, often a linear order, to obtain a Ramsey class. To rule out vacuous expansions, Nguyen Van Th\'e~\cite{NVT2013} introduced the expansion property. Since in this paper we only add a linear order, we state the definition directly in this setting.

\begin{definition}[Expansion property~\cite{NVT2013}]\label{defn:expproperty}
Let $\mathcal C$ be a class of finite structures and let $\mathcal C^+$ be a class of structures obtained from members of $\mathcal C$ by adding a linear order. We say that $\mathcal C^+$ has the \emph{expansion property} (with respect to $\mathcal C$) if for every $\str B\in \mathcal C$ there is $\str C\in \mathcal C$ such that every expansion $\str B^+\in\mathcal C^+$ of $\str B$ embeds into every expansion $\str C^+\in\mathcal C^+$ of $\str C$.
\end{definition}

\medskip

In our case, we add certain linear orders which behave well with respect to the natural equivalence relations given by the distances. For the rest of this subsection, assume that $S$ satisfies the 4-values condition.

We define a binary relation $\sim$ on $S$ by saying that $a\sim b$ if and only if there is a sequence $x_1,x_2,\ldots,x_m$ such that $x_i\in S$ for every $i$, $x_1 = \min(a,b)$, $x_m\geq\max(a,b)$ and $x_i-x_{i-1} \leq \min(a,b)$ for every $2\leq i\leq m$.  It is straightforward to verify using the 4-values condition that $\sim$ is an equivalence relation and that if $a\sim b$ and $c\in S$ is such that $a\leq c\leq b$, then $a\sim c$ (i.e. the equivalence classes of $\sim$ form intervals in the order on $S$). We will call the equivalence classes of $\sim$ \emph{blocks} of $S$. For finite $S$, the same block decomposition appears in~\cite{Sauer2012}.

\begin{definition}\label{defn:s_convex_order}
Let $\str A = (A, d_\str A)$ be an $S$-metric space and let $B$ be a block of $S$. We define a \emph{block equivalence} $\sim_B$ on vertices of $\str A$ by putting $u\sim_B v$ if and only if there is $b\in B$ such that $d_\str A(u,v)\leq b$. We say that a linear order $\leq_\str A$ of $A$ is \emph{convex with respect to block equivalences} if for every block $B$ of $S$ it holds that every equivalence class of $\sim_B$ forms an interval in $\leq_\str A$.
\end{definition}
The previous definition extends the definition of convex ordering on $S$-metric spaces from~\cite{Hubicka2016}.

\begin{example}
The set $S=\{1,3,7\}$ satisfies the 4-values condition and has three blocks $\{1\}$, $\{3\}$, and $\{7\}$. The set $S'=\{1,3,5,7\}$ also satisfies the 4-values condition and has two blocks, $\{1\}$ and $\{3,5,7\}$.
\end{example}

\subsection{Distance monoids}
In this subsection, we recall the definitions and the Ramsey theorem from~\cite{Hubicka2017sauerconnant}.
When $S$ is finite, the connection between an associative operation of this form and the amalgamation property for $S$-metric spaces was already observed by Sauer~\cite{sauer2013b}.

\begin{definition}[\cite{Hubicka2017sauerconnant}]
A \emph{distance monoid} is a tuple $\EMonoid$ such that
\begin{enumerate}
\item $(M,\oplus,0)$ is a commutative monoid with identity $0$;
\item $(M,\mleq)$ is a linear order with least element $0$;
\item\label{def:POS:monot} if $a\mleq b$ and $c\mleq d$, then $a\oplus c\mleq b\oplus d$.
\end{enumerate}
\end{definition}
Note that the last point and the fact that $0$ is the least element imply $a\mleq a\oplus b$ for all $a,b\in M$.

\begin{definition}[\cite{Hubicka2017sauerconnant}]
Let $\EMonoid$ be a distance monoid. Given a set $A$ and a function $d\colon A^2\to M$, we call $(A,d)$ an \emph{$\Monoid$-metric space} if for every $x,y,z\in A$:
\begin{enumerate}
\item $d(x,y)=0$ if and only if $x=y$;
\item $d(x,y)=d(y,x)$; and
\item $d(x,z)\mleq d(x,y)\oplus d(y,z)$.
\end{enumerate}
\end{definition}

Let $\EMonoid$ be a distance monoid. For $a,b\in M\setminus\{0\}$ put $a\approx b$ if and only if there are positive integers $m$ and $n$ such that $m\times a\mgeq b$ and $n\times b\mgeq a$, where
$$m\times a = \underbrace{a\oplus a\oplus\cdots\oplus a}_{\text{$m$ times}}$$
and similarly for $n\times b$. The set $\{0\}$ and the equivalence classes of $\approx$ are called \emph{blocks} of $\Monoid$.

\begin{definition}\label{defn:convex_monoid}
Let $\EMonoid$ be a distance monoid and let $\str A = (A, d_\str A)$ be an $\Monoid$-metric space. Let $B$ be a block of $\Monoid$. We define a \emph{block equivalence} $\sim_B$ on vertices of $\str A$ by putting $u\sim_B v$ if and only if there is $b\in B$ such that $d_\str A(u,v)\mleq b$. We say that a linear order $\leq_\str A$ of $A$ is \emph{convex} if every equivalence class of $\sim_B$ forms an interval in $\leq_\str A$ for every block $B$ of $\Monoid$.
\end{definition}

We will use the following theorem.
\begin{theorem}[\cite{Hubicka2017sauerconnant}]\label{thm:monoids}
For every distance monoid $\EMonoid$, the class of all finite convexly ordered $\Monoid$-metric spaces is Ramsey.
\end{theorem}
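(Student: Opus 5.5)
The plan is to derive Theorem~\ref{thm:monoids} from the general Hubi\v cka--Ne\v set\v ril machinery~\cite{Hubicka2016}. That machinery says, roughly: if $\mathcal K$ is a class of finite ordered structures, $\mathcal R$ is a Ramsey class of ordered structures, every member of $\mathcal K$ is in $\mathcal R$, and $\mathcal K$ is a \emph{locally finite} subclass of $\mathcal R$, then $\mathcal K$ is Ramsey. Here locally finite means that there is a bound $n$ with the following property: if $\str C\in\mathcal R$ and $\str X$ is an irreducible-substructure-preserving image of $\str C$ all of whose substructures on at most $n$ vertices have completions in $\mathcal K$, then $\str X$ has a completion in $\mathcal K$. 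We also need $\mathcal K$ to be hereditary and closed under strong amalgamation with respect to these completions.

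\textbf{Step 1: reduce to a finite label set.} Fix $\str A,\str B$. The Ramsey witness only has to contain copies of $\str B$, so let $L\subseteq M\setminus\{0\}$ be the finite set of distances occurring in $\str B$. We view $\Monoid$-metric spaces as complete edge-labelled graphs, with one symmetric binary relation for each label. We add a linear order, together with one equivalence relation $E_i$ for each block $B_i$ of $\Monoid$ that meets $L$ (only finitely many). The relation $E_i$ is intended to be $\sim_{B_i}$; since the blocks are intervals, these equivalences are nested, and the order is required to be convex for all of them.

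For $\mathcal R$ we take the class of all finite ordered structures in this language with convex nested equivalences. This class is Ramsey by the Ne\v set\v ril--R\"odl theorem combined with the standard product and lexicographic argument for convex equivalences.

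\textbf{Step 2: completions via shortest paths.} Given a partial $L$-labelled graph, set $d(u,v)$ to be the $\mleq$-minimum of $a_1\oplus\cdots\oplus a_k$ over all labelled paths from $u$ to $v$. The minimum exists, since only finitely many sums need to be compared, after we truncate as described below. This is an $\Monoid$-metric exactly when no edge $uv$ with label $a$ lies on a cycle whose remaining labels satisfy $a\succ a_1\oplus\cdots\oplus a_k$ (a ``non-metric cycle''). Monotonicity of $\oplus$ (condition~\eqref{def:POS:monot}) is what makes the triangle inequality hold for path sums. The equivalences $E_i$ must be respected as well, and they take care of the disconnected or ``far'' pairs. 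We place those pairs at distances from the appropriate higher block, and the convex order extends because the $E_i$ are already convex in $\str C$.

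\textbf{Step 3 (main obstacle): a bound on the length of non-metric cycles.} Suppose a non-metric cycle has edge $a$ and path labels $a_1,\dots,a_k$. Since $a\succ\sum a_j$ and $a_j\mleq\sum a_j$, every $a_j\prec a$.

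If some $a_j$ lies in the same block as $a$, then $m\times a_j\mgeq a$ for some $m$. So any subpath containing $m$ labels from that block already sums to something $\mgeq a$, bounding how many such labels the cycle can contain. The labels from strictly lower blocks are a different problem: no finite number of them can ever exceed $a$, so they could form arbitrarily long cycles.

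This is exactly where the equivalences $E_i$ must be used. All vertices joined by edges from lower blocks lie in a common $E$-class of that lower block, while the endpoints of $a$ are not in that class. Contracting each such class to a single vertex then shortens the cycle to one using only labels from the block of $a$. Bounding that contracted cycle by the same-block argument, with $m$ taken over the finitely many pairs in $L$, gives a bound $n=n(L)$.

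I expect the delicate part to be making this contraction compatible with the local-finiteness definition, since we are working with homomorphic images rather than substructures. I would handle this by putting the forbidden configurations directly into the definition of $\mathcal K$: an edge $a$ between $E_i$-equivalent vertices whenever $a$ lies above block $i$, and short same-block non-metric cycles. With these forbidden, Step~2 applies, and the Hubi\v cka--Ne\v set\v ril theorem then yields $\str C\longrightarrow(\str B)^{\str A}_2$ inside the convexly ordered $\Monoid$-metric spaces.
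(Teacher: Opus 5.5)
\textbf{Verdict: genuine gap in Step~3.} The paper gives no proof of this statement; it quotes it from \cite{Hubicka2017sauerconnant}, so I assess your argument on its own. Reducing to a locally finite subclass of a Ramsey class with convex equivalences is a sensible strategy. However, Step~3, which carries all the weight, does not go through as you set it up.

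\textbf{With your $\mathcal R$, local finiteness fails outright.} In your $\mathcal R$ the labels are unconstrained, so the $E_i$-relations of a $\str C_0$-structure $\str X$ say nothing useful. They need not be present on lower-block edges, and they need not be transitive. Take the ultrametric monoid $\{0,1,2\}$ with $\oplus=\max$, and let $\str C_0\in\mathcal R$ consist of two vertices $x<y$ carrying both labels $1$ and $2$, with trivial equivalences. For odd $k$, consider the cycle $v_0v_1\cdots v_k$ with label $1$ on each $v_iv_{i+1}$ and label $2$ on $v_0v_k$, ordered along edges and mapped alternately to $x,y$.
\begin{itemize}
\item It is a $\str C_0$-structure with no completion.
\item Every proper substructure does have a completion.
\end{itemize}
So no bound $n(\str C_0)$ exists, and your claim that vertices joined by lower-block edges lie in a common $E$-class is false for $\str C_0$-structures. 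Adding forbidden configurations to $\mathcal K$ cannot fix this. $\mathcal K$ already excludes them, and in $\str X$ the violation is spread along an unbounded path rather than sitting in a bounded substructure.

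\textbf{The natural repair still fails.} You could impose consistency in $\mathcal R$, so that labels from block $i$ occur only on pairs in $E_i\setminus E_{i-1}$, and pull the equivalences back from $\str C_0$. This rules out cycles whose remaining edges all come from lower blocks. But your contraction step still does not give local finiteness, because the contracted cycle is not a substructure of $\str X$. In $\mathbb N^2$ with the lexicographic order, put $a=(2,0)$, $c=(1,0)$ and $b=(0,1)$, so that $a\succ c\oplus k\times b$ for every $k$. Let $\str C_0$ have $E_1$-classes $\{p_1,p_2\}$ and $\{q_1,q_2\}$, with $b$ inside each class, $c$ on $p_1q_1$ and $a$ on $p_2q_2$. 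Build a cycle from:
\begin{itemize}
\item an odd $b$-path $u\cdots s$ mapped alternately onto $p_2,p_1$;
\item an edge $st$ labelled $c$;
\item an odd $b$-path $t\cdots v$ mapped alternately onto $q_1,q_2$;
\item an edge $uv$ labelled $a$.
\end{itemize}
This is a non-metric $\str C_0$-structure of unbounded length. Every proper substructure is completable, because only the long paths record that $u\sim s$ and $t\sim v$.

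\textbf{What is missing.} You need a device that makes block classes visible in bounded pieces of $\str X$. One option is explicit class vertices attached by unary functions (closures), which turn your contraction into an actual identification inside $\str X$. Another is a base class, for instance built inductively over blocks, whose members already carry consistent metric information between classes.
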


\section{The result}
Throughout the section we fix a set $S\subseteq \mathbb R^+$ which satisfies the 4-values condition. We denote by $\mathfrak S$ the set of all subsets $X\subseteq S$ satisfying the following:
\begin{enumerate}
\item If $x\in X$ and $y\in S$ is such that $y\leq x$, then $y\in X$ (i.e. $X$ is downwards closed),
\item there is $r\in \mathbb R$ such that for every $x\in X$ we have $x\leq r$ (i.e. $X$ is bounded from above), and
\item $X$ is non-empty.
\end{enumerate}
For  $x\in S$ let $\hat x = \{s\in S : s\leq x\}$ and put $\widehat S=\{\hat x:x\in S\}$. A set  $X\in \mathfrak S$ is \emph{original} if $X\in\widehat S$, otherwise $X$ is \emph{virtual}.

First, we define a binary commutative operation $\oplus\colon \mathfrak S^2\to \mathfrak S$, putting
$$X\oplus Y = \{s\in S : (\exists x\in X)(\exists y\in Y)(x+y\geq s)\}.$$
Note that if $X=\hat x$ and $Y=\hat y$ are original, then $X\oplus Y = \{s\in S : s\leq x+y\}$.

It is straightforward to verify that $X\subseteq X\oplus Y$ for every $X,Y\in \mathfrak S$, and that if $Y\subseteq Z$, then $X\oplus Y \subseteq X\oplus Z$. In other words, the operation $\oplus$ is monotone with respect to set inclusion. Moreover, this inclusion is a linear order on $\mathfrak S$.

The order and monotonicity properties above show that, after adding a least neutral element $0$, the only distance monoid axiom left to verify is associativity of $\oplus$. For this, one needs the 4-values condition and an auxiliary observation.
\begin{observation}\label{obs:workswell}
For every $X,Y\in \mathfrak S$ we have $\sup(X\oplus Y) \leq \sup(X)+\sup(Y)$. In particular, for $x,y,z\in S$ we have $x+y\geq z$ if and only if $\hat x\oplus \hat y\supseteq \hat z$.
\end{observation}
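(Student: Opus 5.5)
The first part follows from the definition of $\oplus$ together with the fact that members of $\mathfrak S$ are nonempty and bounded, so all suprema involved are finite reals. The second part then comes from the first part and the downward closure of the sets $\hat x$.

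\emph{First claim.} Let $s\in X\oplus Y$. By definition there are $x\in X$ and $y\in Y$ with $s\leq x+y$. Since $x\leq\sup(X)$ and $y\leq\sup(Y)$, we get $s\leq \sup(X)+\sup(Y)$. This holds for every element $s$ of $X\oplus Y$, so $\sup(X)+\sup(Y)$ is an upper bound for $X\oplus Y$. Hence $\sup(X\oplus Y)\leq\sup(X)+\sup(Y)$.

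\emph{Second claim, the direction $x+y\geq z\Rightarrow \hat x\oplus\hat y\supseteq\hat z$.} Take any $s\in\hat z$, so $s\in S$ and $s\leq z\leq x+y$. We have $x\in\hat x$ and $y\in\hat y$, so $s$ satisfies the defining condition of $\hat x\oplus\hat y$ with witnesses $x$ and $y$. Thus $s\in\hat x\oplus\hat y$. One could also quote the remark that $\hat x\oplus\hat y=\{s\in S: s\leq x+y\}$ for original sets.

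\emph{Second claim, the converse.} Suppose $\hat x\oplus\hat y\supseteq\hat z$. Since $z\in S$, we have $z\in\hat z$, and therefore $z\in\hat x\oplus\hat y$. By the first claim,
\[
z\leq\sup(\hat x\oplus\hat y)\leq \sup(\hat x)+\sup(\hat y).
\]
Finally $\sup(\hat x)=x$, because $x\in\hat x$ and every element of $\hat x$ is at most $x$; likewise $\sup(\hat y)=y$. Hence $z\leq x+y$.

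Nothing here is a real obstacle. The only point needing care is in the converse: we must use that $z$ itself lies in $\hat z$, so that we bound the element $z$ and not merely the supremum of $\hat z$. That membership holds because $z\in S$.
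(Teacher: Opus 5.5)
Your proof is correct and follows essentially the same route as the paper. The paper uses the same upper-bound argument for the first claim and the same facts $\sup(\hat x)=x$, $\sup(\hat y)=y$ and $z\in\hat z$ for the equivalence; the only cosmetic difference is that you check $\hat z\subseteq\hat x\oplus\hat y$ elementwise, whereas the paper shows $z\in\hat x\oplus\hat y$ and uses downward closure implicitly.
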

\begin{proof}
By definition, $s\in X\oplus Y$ if and only if $s\in S$ and there are $x\in X$ and $y\in Y$ such that $s\leq x+y$. So in particular $s\leq \sup(X) + \sup(Y)$ for every $s\in X\oplus Y$. Hence this also holds for $\sup(X\oplus Y)$. If $x,y,z\in S$ are such that $x+y\geq z$, we know that $\sup(\hat x) = x$ and $\sup(\hat y)=y$, hence $z\in \hat x\oplus \hat y$, which means that $\hat x\oplus \hat y\supseteq \hat z$. In the other direction, if $\hat x\oplus \hat y\supseteq \hat z$, then $z\in \hat x\oplus \hat y$, so $z\leq x+y$.
\end{proof}

\begin{prop}
The operation $\oplus$ is associative; that is, for every $A,B,C\in \mathfrak S$ we have that $(A\oplus B)\oplus C = A\oplus (B\oplus C)$.
\end{prop}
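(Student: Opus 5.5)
The plan is to prove the inclusion $(A\oplus B)\oplus C\subseteq A\oplus(B\oplus C)$ directly from the definition. The reverse inclusion then follows by symmetry: using commutativity of $\oplus$, we have $A\oplus(B\oplus C)=(C\oplus B)\oplus A$, and applying the first inclusion to the triple $(C,B,A)$ gives
\[
(C\oplus B)\oplus A\subseteq C\oplus(B\oplus A)=(A\oplus B)\oplus C .
\]

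Unwinding the definition, let $s\in(A\oplus B)\oplus C$. Then there are $t\in A\oplus B$ and $c\in C$ with $s\leq t+c$, and there are $a\in A$ and $b\in B$ with $t\leq a+b$; all of $a,b,c,t,s$ lie in $S$. We must find $u\in S$ with $u\leq b'+c'$ for some $b'\in B$, $c'\in C$, and $s\leq a'+u$ for some $a'\in A$.

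First I dispose of the easy cases, using $X\subseteq X\oplus Y$.
\begin{itemize}
\item If $s\leq a+b$, take $u=b\in B\subseteq B\oplus C$.
\item If $s\leq a+c$, take $u=c\in C\subseteq B\oplus C$.
\item If $s\leq b+c$, then $s\in B\oplus C$, so take $u=s$; then $s\leq a+s$.
\end{itemize}
So we may assume $s>a+b$, $s>a+c$ and $s>b+c$. In particular $s$ exceeds each of $a,b,c$, and $t\leq a+b<s$.

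The main step is to check that in this remaining case the triples $(a,b,t)$ and $(c,s,t)$ both satisfy the triangle inequality. Then the 4-values condition applies with common value $x=t$.
\begin{itemize}
\item For $(a,b,t)$: $t\leq a+b$ is given. If $a>b+t$, then $s\leq t+c<a+c$, contradicting our assumption; symmetrically, $b>a+t$ would give $s<b+c$, again a contradiction.
\item For $(c,s,t)$: $s\leq t+c$ is given, $t<s\leq c+s$, and $c<s\leq t+s$.
\end{itemize}
Apply the 4-values condition to the triangles $(b,a,t)$ and $(c,s,t)$. This yields $y\in S$ such that $(b,c,y)$ and $(a,s,y)$ are both metric triangles. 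Hence $y\leq b+c$, so $y\in B\oplus C$, and $s\leq a+y$, so $s\in A\oplus(B\oplus C)$.

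The only delicate point is the one above. The definition of $\oplus$ gives only the one-sided inequalities $t\leq a+b$ and $s\leq t+c$, whereas the 4-values condition needs genuine triangles. This is exactly why the degenerate cases must be removed first: they are precisely the situations in which one of the other triangle inequalities could fail. Once that is done, no further properties of $\mathfrak S$ are needed beyond downward closure, which guarantees that the witnesses $u$ lie in the right sets.
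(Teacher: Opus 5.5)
Your proof is correct and is essentially the paper's argument. The paper proves the mirror-image inclusion $(A\oplus B)\oplus C\supseteq A\oplus(B\oplus C)$ by contradiction, with the roles of $A$ and $C$ swapped, and, like you, applies the 4-values condition once after discarding degenerate configurations. Your case split on the explicit witness inequalities ($s\le a+b$, $s\le a+c$, $s\le b+c$) replaces the paper's $\sup$-based reductions and is slightly more direct.
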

\begin{proof}
We prove that $(A\oplus B)\oplus C \supseteq A\oplus (B\oplus C)$ for every $A,B,C\in \mathfrak S$. This is enough to get the conclusion, because, using commutativity, one then gets the other inequality:
$$A\oplus (B\oplus C) = (C\oplus B)\oplus A \supseteq C\oplus (B\oplus A) = (A\oplus B)\oplus C.$$

For a contradiction, suppose that there are $A,B,C\in \mathfrak S$ such that $(A\oplus B)\oplus C \subsetneq A\oplus (B\oplus C)$. This means that there is $z\in A\oplus (B\oplus C)$ such that $z\notin (A\oplus B)\oplus C$. Using the definition of $\oplus$, we get $a\in A$ and $s\in (B\oplus C)$ such that $z\leq a+s$. We may assume that $s\geq \sup(B)$ and $s\geq \sup(C)$. Indeed, if $s<\sup(B)$, then $s\in B$ and hence $z\in A\oplus B\subseteq (A\oplus B)\oplus C$, a contradiction. Similarly, if $s<\sup(C)$, then $s\in C$ and, since $A\subseteq A\oplus B$, again $z\in (A\oplus B)\oplus C$.

Clearly, $z > a$ (as otherwise $z\in A$ and by monotonicity of $\oplus$ we would have that $z\in (A\oplus B)\oplus C$.) It also holds that $z > s$: If $z\leq s$, then $z\in B\oplus C$, but by monotonicity we have that $(A\oplus B) \supseteq B$ and hence $(A\oplus B)\oplus C\supseteq B\oplus C$. Thus again $z\in (A\oplus B)\oplus C$, a contradiction. Together with $z\leq a+s$ this gives that the triangle with distances $(a,z,s)$ satisfies the triangle inequality.

Using the definition of $\oplus$ again, we get $b\in B$, $c\in C$ such that $b+c\geq s$. Because $s\geq \sup(B)$ and $s\geq \sup(C)$, in particular $s\geq b,c$, hence the triangle with distances $(b,c,s)$ satisfies the triangle inequality. Therefore, using the 4-values condition, there is $y\in S$ such that the triangles $(a,b,y)$ and $(c,z,y)$ satisfy the triangle inequality.

In particular, $a+b \geq y$ and $y+c\geq z$. Using Observation~\ref{obs:workswell}, we derive that $\hat a\oplus \hat b \supseteq \hat y$ and that $z\in \hat y\oplus \hat c$. Using monotonicity of $\oplus$, we get that $z\in (\hat a\oplus \hat b)\oplus \hat c$, contradicting the choice of $z$.
\end{proof}

Let $M=\{0\}\cup\mathfrak S$, where $0$ is a new element. Extend $\oplus$ by putting $0\oplus X=X$ and extend inclusion to a linear order $\mleq$ on $M$ by declaring $0\mleq X$ for every $X\in M$. By the previous proposition and the monotonicity observed above, $\Monoid=(M,\oplus,\mleq,0)$ is a distance monoid. From now on, we fix this monoid. We identify $\mathfrak S$ with the set of its positive elements and continue to use inclusion for their order. Note that Observation~\ref{obs:workswell} implies that if in an $\Monoid$-metric space there is a triangle with positive distances $X,Y,Z$, then $\sup(X)+\sup(Y) \geq \sup(Z)$. In particular, if $X=\hat x$, $Y=\hat y$ and $Z=\hat z$, then $x+y\geq z$.

\medskip

Later in the proof, we will need the following facts about $\Monoid$.

\begin{observation}\label{obs:looseness}
Let $A,B,C\in \mathfrak S$ such that $A$ is virtual and $C=\hat c$ is original.
\begin{enumerate}
\item If $A\oplus B\supseteq C$, then there is a number $a\in A$ such that $\hat a\oplus B\supseteq C$.
\item If $A\oplus A\supseteq C$, then there is a number $a\in A$ such that $\hat a\oplus \hat a\supseteq C$.
\end{enumerate}
\end{observation}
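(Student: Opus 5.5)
Both parts come straight from the definition of $\oplus$, together with the fact that every element of $\mathfrak S$ is downwards closed in $S$. I do not expect to use virtuality of $A$ at all; it seems to be included only because this is the case in which the observation is later applied. The plan has three steps.

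First, I will reduce containment of an original set to membership of its top element. If $X\in\mathfrak S$ and $c\in X$, then $\hat c\subseteq X$, because $X$ is downwards closed. Conversely, $\hat c\subseteq X$ gives $c\in X$, since $c\in\hat c$. Every set of the form $X\oplus Y$ is downwards closed directly from its definition: if $s\leq x+y$ and $s'\leq s$ with $s'\in S$, then $s'\leq x+y$. So for $C=\hat c$, the hypothesis $A\oplus B\supseteq C$ is equivalent to $c\in A\oplus B$, and the same holds for every conclusion of the form $\hat a\oplus B\supseteq C$.

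Second, for part (1): from $c\in A\oplus B$, the definition of $\oplus$ gives $a\in A$ and $b\in B$ with $a+b\geq c$. Since $a\in\hat a$, the same pair $(a,b)$ witnesses $c\in\hat a\oplus B$. By the first step this gives $\hat a\oplus B\supseteq \hat c=C$.

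Third, for part (2): from $c\in A\oplus A$ I obtain $a_1,a_2\in A$ with $a_1+a_2\geq c$. I will put $a=\max(a_1,a_2)$, which lies in $A$. Then $a+a\geq a_1+a_2\geq c$. By Observation~\ref{obs:workswell}, $\hat a\oplus\hat a\supseteq\hat c=C$.

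There is no real obstacle here. The only point needing care is the reduction in the first step, namely that containing $\hat c$ is the same as containing $c$. This needs both sides to be downwards closed, which I have checked. If one wanted $a$ to be strictly below $\sup A$, virtuality would supply this automatically: a virtual $A$ has no maximum, since otherwise $A=\hat m$ for its maximum $m$.
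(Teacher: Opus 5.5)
Your proof is correct and follows the paper's argument. You extract $a\in A$, $b\in B$ with $a+b\geq c$ from the definition of $\oplus$ (using that $C=\hat c$ is original), conclude part (1) from the fact that $a\in\hat a$ (the paper says monotonicity), and obtain part (2) by taking the larger of the two witnesses; you are also right that virtuality of $A$ is never used, and the paper's proof does not use it either.
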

\begin{proof}
In the first case, by the definition of $\oplus$ and by the fact that $C$ is original, there are $a\in A$ and $b\in B$ such that $a+b\geq c$. The first statement follows directly from monotonicity. The second follows by applying the first with $A=B$ and choosing the larger of $a$ and $b$.
\end{proof}

\begin{observation}\label{obs:equivalences}
Let $A\in \mathfrak S$ be virtual. There is $a\in A$ such that $\hat a\oplus \hat a \supseteq A$. (Hence in particular $\hat a$ and $A$ are in the same block of $\Monoid$.)
\end{observation}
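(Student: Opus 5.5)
Let $A\in\mathfrak S$ be virtual and put $\alpha=\sup(A)$, which is finite because $A$ is bounded. The first step is to pin down what being virtual means. If $\alpha\in A$, then $\alpha\in S$ and $A=\hat\alpha$, since $A$ is downwards closed; that would make $A$ original. Hence $\alpha\notin A$. Moreover, any $s\in S$ with $s<\alpha$ lies below some element of $A$, so it belongs to $A$. Together these give
$$A=\{s\in S: s<\alpha\},$$
and since $\alpha$ is not attained, $A$ contains elements arbitrarily close to $\alpha$ from below.

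Next, choose $a\in A$ with $a>\alpha/2$. Such an $a$ exists because $A$ is non-empty and $\sup A=\alpha>\alpha/2$, using $\alpha>0$ as $S\subseteq\mathbb R^+$. Since $\hat a$ is original, the remark after the definition of $\oplus$ gives $\hat a\oplus\hat a=\{s\in S: s\le 2a\}$. Every $s\in A$ satisfies $s<\alpha<2a$, so $A\subseteq \hat a\oplus\hat a$, which is the first claim.

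For the parenthetical claim, note that $\hat a\subseteq A$ because $a\in A$ and $A$ is downwards closed. Using $2\times\hat a=\hat a\oplus\hat a\supseteq A$ and $1\times A=A\supseteq\hat a$, the definition of $\approx$ is satisfied with $m=2$ and $n=1$. Hence $\hat a\approx A$, so $\hat a$ and $A$ lie in the same block of $\Monoid$.

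The only point needing care is the first step, the characterisation of a virtual $A$ as $\{s\in S:s<\sup A\}$ with the supremum not attained. Everything after that is immediate.
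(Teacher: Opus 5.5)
Your proof is correct and is essentially the paper's argument: pick $a\in A$ with $a>\sup(A)/2$ (the paper phrases this as $\sup(A)-a<\sup(A)/2$), so every element of $A$ is at most $\sup(A)<2a$ and therefore lies in $\hat a\oplus\hat a$. Your characterisation of a virtual $A$ as $\{s\in S: s<\sup A\}$ is correct but not needed, since $s\le\sup(A)<2a$ already suffices and virtuality is never used; your explicit check of $\hat a\approx A$ with $m=2$, $n=1$ fills in a detail the paper leaves implicit.
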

\begin{proof}
Put $a' = \sup(A)$. This means that for every $\varepsilon > 0$, there is $x\in A$ such that $a'-x<\varepsilon$. Hence, in particular, there is $a\in A$ such that $a'-a < \frac{a'}{2}$. Thus $a + a \geq a'$, and hence $\hat a\oplus \hat a\supseteq A$.
\end{proof}

\begin{lemma}\label{lem:orders}
Let $a,b\in S$. Then $a\sim b$ if and only if $\hat a\approx \hat b$.
\end{lemma}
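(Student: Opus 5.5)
We prove the two implications separately. Throughout, assume without loss of generality that $a\leq b$, so $\hat a\subseteq \hat b$. The first step is to compute $m\times\hat a$ in $\Monoid$. By Observation~\ref{obs:workswell} and induction, $m\times \hat a$ is contained in $\{s\in S: s\leq ma\}$. Conversely, an element $s$ belongs to $m\times \hat a$ exactly when $s$ can be reached by a chain of ``partial sums''. More precisely, there are $s_1=x_1\in \hat a$, and $s_i\in S$ with $s_i\leq s_{i-1}+x_i$ for some $x_i\in\hat a$, such that $s\leq s_m$. Here each intermediate value lies in $S$, because $(k\times\hat a)\oplus\hat a$ only uses elements of $S$ in $k\times \hat a$.

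For $a\sim b\Rightarrow \hat a\approx\hat b$, take the witnessing sequence $x_1=a,\dots,x_m\geq b$ with $x_i-x_{i-1}\leq a$ and all $x_i\in S$. By induction on $i$ we get $x_i\in i\times\hat a$. Indeed, $x_{i}\leq x_{i-1}+a$, $x_{i-1}\in (i-1)\times \hat a$ and $a\in\hat a$, so the definition of $\oplus$ gives $x_i\in ((i-1)\times\hat a)\oplus\hat a$. Hence $b\leq x_m$ gives $\hat b\subseteq m\times\hat a$. Trivially $1\times\hat b\supseteq\hat a$, so $\hat a\approx\hat b$.

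For $\hat a\approx\hat b\Rightarrow a\sim b$, suppose $m\times\hat a\supseteq\hat b$, i.e.\ $b\in m\times \hat a$. Unwinding the definition of $\oplus$ iteratively, I will extract $y_1,\dots,y_m\in S$ and $t_2,\dots,t_m\in\hat a$ with $y_1\leq a$, $y_i\leq y_{i-1}+t_i$ and $y_m\geq b$. This shows $b$ lies in $(m-1)\times\hat a\oplus\hat a$, so there are $y_{m-1}\in(m-1)\times\hat a$ and $t_m\leq a$ with $y_{m-1}+t_m\geq b$; then recurse. The main obstacle is turning this into a $\sim$-chain. Such a chain must start at exactly $a$, be nondecreasing, and have steps at most $a$, whereas the $y_i$ need not be monotone or start at $a$. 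I would fix this by replacing $y_i$ with $z_i=\max(a,y_1,\dots,y_i)$. Each $z_i$ is in $S$, $z_1=a$, and $z_m\geq b$. For the steps, if $z_i>z_{i-1}$ then $z_i=y_i\leq y_{i-1}+t_i\leq z_{i-1}+a$, so every increment is at most $a=\min(a,b)$. Removing repeated values (or keeping them, since zero steps are allowed) gives the required sequence, so $a\sim b$.
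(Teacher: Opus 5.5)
Your proof is correct and is essentially the paper's argument. The forward direction is the same, and your running-maximum chain $z_i=\max(a,y_1,\dots,y_i)$ is an explicit, unrolled version of the paper's induction on $n$ (which proves that every $s\geq a$ in $n\times\hat a$ satisfies $a\sim s$ using the same decomposition $s\leq x+y$), with the small bonus that you never need the fact that blocks are intervals.

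As a side remark, the definition of $\sim$ does not require the chain to be nondecreasing, so $a,y_2,\dots,y_m$ would already be a valid witness (since $y_2-a\leq y_1+t_2-a\leq t_2\leq a$ and $y_i-y_{i-1}\leq t_i\leq a$), though your monotone version is of course also valid.
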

\begin{proof}
First suppose that $a\sim b$ and without loss of generality assume that $a\leq b$. It is enough to find $n$ such that $n\times \hat a\supseteq \hat b$. From $a\sim b$, we get a sequence $x_1,x_2,\ldots,x_n$ such that $x_1 = a$, $x_n\geq b$, $x_i\in S$ and $x_{i}-x_{i-1} \leq a$ for every $i$. Also $\hat x_{i-1}\oplus \hat a = \{s\in S : s\leq x_{i-1}+a\}$ and $x_i \in \hat x_{i-1}\oplus \hat a$ because $x_i\leq x_{i-1}+a$. Thus $\hat x_i\subseteq \hat x_{i-1}\oplus \hat a$. Using induction, we then get that $n\times \hat a\supseteq \hat b$.

\medskip

Now suppose that $\hat a\approx \hat b$ and, without loss of generality, $a\leq b$. Choose $n$ such that $n\times\hat a\supseteq\hat b$. We prove by induction on $n$ that if $s\in n\times\hat a$ and $s\geq a$, then $a\sim s$. The case $n=1$ is clear. Suppose that $s\in (n+1)\times\hat a$ and $s\geq a$. There are $x\in n\times\hat a$ and $y\in\hat a$ such that $s\leq x+y$. If $x<a$, then $s-a<a$, and hence $a\sim s$. We can thus assume that $x\geq a$. By the induction hypothesis, $a\sim x$. If $s\leq x$, then $a\sim s$ because blocks are intervals. Otherwise $s-x\leq y\leq a$, so we can append $s$ to a sequence witnessing $a\sim x$. It follows that $a\sim s$. Since $b\in\hat b\subseteq n\times\hat a$, this gives $a\sim b$.
\end{proof}

\medskip

Let $\str A$ be an (ordered) $S$-metric space. We can define an (ordered) $\Monoid$-metric space $T(\str A)$ as follows:
\begin{enumerate}
\item The vertices of $T(\str A)$ are the same as the vertices of $\str A$,
\item the order is also the same (if there is one),
\item $d_{T(\str A)}(u,u)=0$ and $d_{T(\str A)}(u,v) = \widehat{d_\str A(u,v)}$ for $u\neq v$.
\end{enumerate}
The fact that $T(\str A)$ is an $\Monoid$-metric space follows from Observation~\ref{obs:workswell}. Note that if $\str A$ and $\str B$ are $S$-metric spaces and $f\colon A\to B$ is a function, then $f$ is an embedding $\str A\to \str B$ if and only if it is an embedding $T(\str A)\to T(\str B)$. We will now observe that orders convex with respect to block equivalences correspond to convex orders in the sense of Definition~\ref{defn:convex_monoid}:
\begin{observation}\label{obs:orders}
Let $\str A$ be an $S$-metric space and let $\leq_\str A$ be a linear order of vertices of $\str A$. The order $\leq_\str A$ is convex with respect to block equivalences if and only if it is convex in the sense of Definition~\ref{defn:convex_monoid} for $T(\str A)$.
\end{observation}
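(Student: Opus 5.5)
The plan is to show that, for every block $B$ of $S$, the equivalence $\sim_B$ on $\str A$ coincides with $\sim_{B'}$ on $T(\str A)$ for a suitable block $B'$ of $\Monoid$. We also need every block $B'$ of $\Monoid$ to give either such an equivalence or a trivial one. Once this correspondence is in place, the two families of equivalence relations on the vertex set agree up to trivial relations, whose classes are automatically intervals. Hence the two convexity conditions say the same thing.

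First, fix a block $B$ of $S$ and pick $b_0\in B$. By Lemma~\ref{lem:orders}, all $\hat b$ with $b\in B$ lie in a single block $B'$ of $\Monoid$, namely the block of $\hat b_0$. Conversely, an original element $\hat s\in B'$ satisfies $\hat s\approx\hat b_0$, so $s\sim b_0$ and $s\in B$.

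Next I show that $\sim_B$ on $\str A$ equals $\sim_{B'}$ on $T(\str A)$. Let $u\neq v$ and put $d=d_\str A(u,v)$. If $d\leq b$ for some $b\in B$, then $\hat d\subseteq\hat b\in B'$. Conversely, suppose $\hat d\subseteq X$ for some $X\in B'$. If $X$ is original, $X=\hat b$ with $b\in B$ by the previous paragraph, and $d\leq b$. If $X$ is virtual, Observation~\ref{obs:equivalences} gives $a\in X$ with $\hat a\approx X$, so $\hat a\in B'$ and hence $a\in B$. I still need some $b\in B$ with $d\le b$, and $d\le a$ need not hold. Since $d\in X$ and $X$ is downward closed, $\hat d\subseteq X\subseteq \hat a\oplus\hat a$. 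If $d\le a$ we are done. Otherwise $a<d\le 2a$, so $a\sim d$ directly from the definition of $\sim$ (the sequence $a,d$ works). Thus $d\in B$ and we take $b=d$.

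Finally, consider a block $B'$ of $\Monoid$ containing no original element; the block $\{0\}$ is handled similarly. By Observation~\ref{obs:equivalences}, every virtual $X$ is $\approx$ some original $\hat a$, so the only remaining block is $\{0\}$. It induces equality on vertices, whose classes are singletons and hence trivially intervals. Therefore the blocks of $\Monoid$ other than $\{0\}$ are exactly the $B'$ arising from blocks $B$ of $S$, and the map $B\mapsto B'$ is a bijection onto them. The two convexity notions then coincide. I expect the main obstacle to be the virtual-$X$ case of the second step, which requires combining Observation~\ref{obs:equivalences} with the two-term sequence argument for $\sim$.
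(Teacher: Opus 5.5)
Your proof is correct and takes essentially the same route as the paper: you use Lemma~\ref{lem:orders} to match blocks of $S$ with the nonzero blocks of $\Monoid$, then compare the two convexity definitions. Your handling of a virtual witness $X\in B'$ (via Observation~\ref{obs:equivalences} and the two-term chain $a,d$), together with your checks that every nonzero block contains an original element and that the block $\{0\}$ induces equality, spells out details that the paper's one-line proof leaves implicit.
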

\begin{proof}
All positive distances in $T(\str A)$ are original.  By Lemma~\ref{lem:orders}, the function $\hat{ }$ induces a bijection between the equivalence classes of $\sim$ and the equivalence classes of $\approx$ on original distances. The observation follows by comparing the two definitions of convexity.
\end{proof}

Our goal is to use Theorem~\ref{thm:monoids} to obtain a Ramsey witness over $\Monoid$ and then adjust its distances back to $S$. We will use the following lemma.

\begin{lemma}\label{lem:finite_palette}
Let $\mathcal P\subseteq\mathfrak S$ be finite. There is a map $f\colon\mathcal P\to\widehat S$ such that:
\begin{enumerate}
\item\label{palette:inside} $f(X)\subseteq X$ for every $X\in\mathcal P$,
\item\label{palette:original} $f(X)=X$ for every original $X\in\mathcal P$,
\item\label{palette:triangles} if $X,Y,Z\in\mathcal P$ form a metric triangle, then $f(X),f(Y),f(Z)$ form a metric triangle, and
\item\label{palette:blocks} $f(X)\approx X$ for every $X\in\mathcal P$.
\end{enumerate}
\end{lemma}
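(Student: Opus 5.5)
We choose $f(X)=\hat{a}_X$ for each virtual $X\in\mathcal P$, with $a_X\in X$ picked close enough to $\sup(X)$. Originals stay fixed, which gives (\ref{palette:original}), and (\ref{palette:inside}) is automatic since $X$ is downward closed. For (\ref{palette:blocks}) we use Observation~\ref{obs:equivalences}: there is $a_0\in X$ with $\hat a_0\oplus\hat a_0\supseteq X$. If $a_X\geq a_0$, then $\hat a_X\subseteq X\subseteq \hat a_X\oplus\hat a_X$, so $\hat a_X\approx X$. So every condition on $a_X$ will be a lower bound of the form ``$a_X$ is at least some element of $X$''. Since $\mathcal P$ is finite, we can take the maximum of finitely many such thresholds.

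The main work is (\ref{palette:triangles}). A triple $X,Y,Z$ forms a metric triangle when each of them is contained in the $\oplus$ of the other two. Fix such a triangle and one inequality, say $Z\subseteq X\oplus Y$. We need $f(Z)\subseteq f(X)\oplus f(Y)$, and we split into cases.

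If $Z$ is virtual, then $f(Z)\subseteq Z$ and we would like $f(X)\oplus f(Y)$ to still cover $f(Z)$. Here the order of choosing matters. My plan is to choose the $a_X$ in decreasing order of the sets $X$ in $\mathcal P$, which is possible since inclusion is linear.

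If $Z=\hat z$ is original (or already chosen as $\hat a_Z$), then $z\in X\oplus Y$. By definition there are $x\in X$ and $y\in Y$ with $x+y\geq z$. We impose $a_X\geq x$ when $X$ is virtual, and $a_Y\geq y$ when $Y$ is virtual. Then Observation~\ref{obs:workswell} gives $\hat z\subseteq f(X)\oplus f(Y)$. When $X=Y$ we use Observation~\ref{obs:looseness}(2) instead.

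The issue is that the choice of $a_Z$ must come first, so that it can serve as the original target $\hat a_Z$. Only afterwards are $a_X,a_Y$ chosen for smaller sets. Fortunately the triangle inequality with $Z$ on the large side matters only when $Z$ is at least as large as $X$ and $Y$. The remaining inequalities, $X\subseteq Y\oplus Z$ with $Z\supseteq X$, hold trivially after replacement. Indeed, $f(X)\subseteq f(Z)\subseteq f(Y)\oplus f(Z)$ once we ensure that $f$ preserves the inclusion order. To get this, we also require $a_X\geq$ the chosen value for any smaller set, and $a_X\leq$ the chosen value for any larger set. The upper bound is compatible because $a_X$ can be taken in $X\subseteq$ the larger set.

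Ties need care. If $X=Z$ as sets, then $f(X)=f(Z)$ automatically, so no conflict arises.

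The main obstacle I expect is exactly this ordering. When we process sets from largest to smallest, the choice of $a_Z$ for a large virtual $Z$ creates lower bounds on the later, smaller $a_X,a_Y$. We must check that these lower bounds are always satisfiable inside $X$, i.e.\ that they are elements of $X$ rather than reaching $\sup(X)$. They are, because they come from witnesses $x\in X$ given by the definition of $\oplus$. Since there are finitely many triangles, we take the maximum of all these witnesses together with $a_0$ and the values already chosen for smaller original sets. This completes the construction.
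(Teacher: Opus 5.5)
You have the paper's strategy in non-inductive form: process virtual elements from the largest down, and replace each virtual $Z$ by $\hat a_Z$ with $a_Z\in Z$ above finitely many thresholds from Observation~\ref{obs:looseness} and Observation~\ref{obs:equivalences}. However, the step that should make $f$ preserve inclusion does not work as written. You need $f(X)\subseteq f(Z)$ whenever $X\subseteq Z$, to dispose of every inequality whose left-hand side is not the largest side. You enforce this by the upper bound $a_X\le a_Z$, and you claim it is compatible because ``$a_X$ can be taken in $X\subseteq$ the larger set''. That reasoning only handles an original $Z=\hat z$. For virtual $Z$, your thresholds for $a_Z$ (witnesses, $a_0$, values of smaller \emph{original} sets) may all lie inside a smaller virtual $X$, and then the thresholds for $a_X$ can exceed $a_Z$.

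For instance, take $S=\mathbb Q^+$, $Z=\{q:q<2\}$, $X=\{q:q<1.9\}$ and $V=\widehat{0.1}$. These form a metric triangle, since $X\oplus V=Z$. Your rules allow $a_0(Z)=1.05$, giving $a_Z=1.05$. For $X$ they allow $a_0(X)=1.5$ and the witness $1.5+0.1\ge 1.05$, giving $a_X=1.5>a_Z$. So either the upper bound cannot be met, or, if you drop it, $\widehat{1.5},\widehat{1.05},\widehat{0.1}$ is not a metric triangle.

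The missing ingredient is the paper's condition (i): for virtual $Z$, also require $a_Z\ge\sup(Y)$ for every $Y\in\mathcal P$ with $Y\subsetneq Z$. This can be met inside $Z$: pick $s\in Z\setminus Y$, and downward closure gives $\sup(Y)\le s$. Then $Y\subseteq\hat a_Z$, so any later choice $a_Y\in Y$ is automatically at most $a_Z$. The upper bounds become vacuous, and $f(Y)\subseteq Y\subseteq f(Z)$ gives inclusion preservation. With this addition your argument is correct. It then coincides with the paper's proof, which does exactly this one virtual element at a time, by induction on the number of virtual elements.
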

\begin{proof}
We proceed by induction on the number of virtual elements of $\mathcal P$. If there are none, take $f$ to be the identity.

Suppose that $\mathcal P$ contains a virtual element and let $X$ be its largest virtual element (with respect to inclusion). Put
$$
\mathcal X_0=\{Y\in\mathcal P:Y\subsetneq X\}.
$$
Further, let $\mathcal X_1$ consist of all pairs $(Y,Z)\in\mathcal P^2$ such that $Y\neq X$, $Z\supsetneq X$, $Z\supsetneq Y$, and $X,Y,Z$ form a metric triangle, and let $\mathcal X_2$ consist of all $Z\in\mathcal P$ such that $Z\supsetneq X$ and $X,X,Z$ form a metric triangle. Since $X$ is the largest virtual element of $\mathcal P$, every $Z$ occurring in $\mathcal X_1$ or $\mathcal X_2$ is original.

Use Observation~\ref{obs:looseness} to choose $a_{Y,Z}\in X$ for every $(Y,Z)\in\mathcal X_1$ such that $\hat a_{Y,Z}\oplus Y\supseteq Z$, and also $a_Z\in X$ for every $Z\in\mathcal X_2$ such that $\hat a_Z\oplus\hat a_Z\supseteq Z$. Finally, let $a_X\in X$ be given by Observation~\ref{obs:equivalences}. Choose $\alpha\in X$ sufficiently large so that
\begin{enumerate}
\item[(i)] $\alpha\geq\sup(Y)$ for every $Y\in\mathcal X_0$,
\item[(ii)] $\alpha\geq a_{Y,Z}$ for every $(Y,Z)\in\mathcal X_1$,
\item[(iii)] $\alpha\geq a_Z$ for every $Z\in\mathcal X_2$, and
\item[(iv)] $\alpha\geq a_X$.
\end{enumerate}
Such an $\alpha$ exists. Indeed, all the sets above are finite; all $a_{Y,Z}$, $a_Z$, and $a_X$ belong to $X$; and if $Y\subsetneq X$, then $\sup(Y)<\sup(X)$ because $X$ is virtual.

Define $h\colon\mathcal P\to\mathfrak S$ by $h(X)=\hat\alpha$ and $h(Y)=Y$ for $Y\neq X$. Clearly $h(Y)\subseteq Y$ for every $Y\in\mathcal P$, it fixes all original elements of $\mathcal P$, and by (iv) we have $h(Y)\approx Y$ for every $Y\in\mathcal P$.

We claim that $h$ also preserves metric triangles. Since $h(X)\subseteq X$, only a triangle inequality in which an occurrence of $X$ is on the right-hand side can become false. Let $D$ be the left-hand side of such an inequality and let $Y$ be the other distance on the right-hand side. If $D\subsetneq X$, then $D\in\mathcal X_0$, and (i) gives $D\subseteq\hat\alpha$. If $D=X$, the inequality is automatic by monotonicity. We may thus assume that $D\supsetneq X$. Since $X$ is the largest virtual element, $D$ is original. If $Y\supseteq D$, the inequality is automatic. Otherwise $D\supsetneq Y$. If $Y=X$, then $D\in\mathcal X_2$ and the inequality follows from (iii); otherwise $(Y,D)\in\mathcal X_1$ and it follows from (ii). This proves the claim.

Put $\mathcal P'=h[\mathcal P]$. It contains one fewer virtual element than $\mathcal P$, so by the induction hypothesis there is a map $g\colon\mathcal P'\to\widehat S$ satisfying properties~\ref{palette:inside}--\ref{palette:blocks}. Put $f=g\circ h$. Since $h(X)=\hat\alpha$ is original, it is fixed by $g$. Properties~\ref{palette:inside} and~\ref{palette:original} follow immediately. Property~\ref{palette:triangles} follows because both $h$ and $g$ preserve metric triangles, and property~\ref{palette:blocks} follows by transitivity of $\approx$.
\end{proof}

The following proposition is now an immediate consequence.
\begin{prop}\label{prop:main}
Let $\str A$ be a finite $\Monoid$-metric space. There is an $S$-metric space $U(\str A)$ satisfying the following:
\begin{enumerate}
\item $\str A$ and $U(\str A)$ have the same set of vertices.
\item\label{main:2} $\Aut(\str A) \subseteq \Aut(U(\str A))$.
\item\label{main:3} For all distinct $u,v\in A$, $d_{U(\str A)}(u,v)\in d_\str A(u,v)$. If $d_\str A(u,v) = \hat s$ for some $s\in S$, that is, if $d_\str A(u,v)$ is original, then $d_{U(\str A)}(u,v) = s$.
\item\label{main:4} A linear order $\leq$ on $A$ is convex with respect to block equivalences of $U(\str A)$ if and only if it is convex for $\str A$ in the sense of Definition~\ref{defn:convex_monoid}.
\end{enumerate}
\end{prop}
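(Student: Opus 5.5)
Apply Lemma~\ref{lem:finite_palette} to the finite set $\mathcal P$ of positive distances that occur in $\str A$. This gives a map $f\colon\mathcal P\to\widehat S$. Define $U(\str A)$ on the vertex set $A$ by putting $d_{U(\str A)}(u,u)=0$ and $d_{U(\str A)}(u,v)=\sup f(d_\str A(u,v))$ for $u\neq v$. In other words, $d_{U(\str A)}(u,v)$ is the unique $s\in S$ with $\hat s=f(d_\str A(u,v))$. Because the new distance depends only on the old one, point~(1) is immediate. Point~\ref{main:2} follows from the same fact: every automorphism of $\str A$ preserves $d_\str A$, and hence it preserves $f\circ d_\str A$.

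Next I check that $U(\str A)$ is an $S$-metric space. Symmetry is clear, and all nonzero distances lie in $S$. For the triangle inequality, take distinct $u,v,w$. The distances $X,Y,Z$ of this triangle in $\str A$ form a metric triangle in $\Monoid$. By property~\ref{palette:triangles}, the images $\hat x,\hat y,\hat z$ also form a metric triangle. Observation~\ref{obs:workswell} then gives $x+y\geq z$ for each ordering of the three sides. Point~\ref{main:3} follows from properties~\ref{palette:inside} and~\ref{palette:original}: we have $s\in\hat s=f(X)\subseteq X$, and $f(X)=X$ whenever $X$ is original.

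For point~\ref{main:4}, I compare the two families of equivalence relations. Applying Observation~\ref{obs:orders} to $U(\str A)$, an order is convex with respect to block equivalences of $U(\str A)$ exactly when it is convex for $T(U(\str A))$. So it suffices to show that $\str A$ and $T(U(\str A))$ induce the same block equivalences $\sim_B$, block by block. The distance in $T(U(\str A))$ is $f(d_\str A(u,v))$, and property~\ref{palette:blocks} says that $f(X)\approx X$. Hence each distance changes only inside its own block of $\Monoid$. Blocks of $\Monoid$ are intervals in $\mleq$, since $\approx$-classes are convex by monotonicity. For a block $B$, the condition $u\sim_B v$ says that $d(u,v)\mleq b$ for some $b\in B$. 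This holds exactly when $d(u,v)$ lies in $B$ or in a block below $B$, and that condition is unchanged by moving $d(u,v)$ within its own block.

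A further point is that the blocks of $\Monoid$ relevant for $T(U(\str A))$ may include blocks containing only virtual elements. Every block of $\Monoid$ that contains a virtual element also contains an original one, by Observation~\ref{obs:equivalences}. So the relations $\sim_B$ are defined for the same blocks $B$ in both cases, and the argument above applies uniformly. The only step needing care is this block-compatibility check; everything else is bookkeeping on top of Lemma~\ref{lem:finite_palette}.
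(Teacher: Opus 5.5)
Your proposal is correct and follows the paper's proof essentially step by step. Like the paper, you apply Lemma~\ref{lem:finite_palette} to the nonzero distances and read off $U(\str A)$ from $f$. You then obtain point~\ref{main:4} by using $f(X)\approx X$ to show that $\str A$ and $T(U(\str A))$ have the same block equivalences, before invoking Observation~\ref{obs:orders}. Your final paragraph is harmless but unnecessary: $\str A$ and $T(U(\str A))$ are both $\Monoid$-metric spaces, so their block equivalences are indexed by the same blocks of $\Monoid$ automatically.
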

Note that the conditions imply that if $\str C$ is an $S$-metric space, then $U(T(\str C)) = \str C$. Also, if all positive distances of an $\Monoid$-metric space $\str A$ are original, then $T(U(\str A))=\str A$.
\begin{proof}
Apply Lemma~\ref{lem:finite_palette} to $\mathcal P=\operatorname{rng}(d_\str A)\setminus\{0\}$. For every $X\in\mathcal P$, write $f(X)=\hat s_X$ and put $d_{U(\str A)}(u,u)=0$ and, for $u\neq v$,
$$
d_{U(\str A)}(u,v)=s_{d_\str A(u,v)}.
$$
Lemma~\ref{lem:finite_palette}(\ref{palette:triangles}) says exactly that this is an $S$-metric space. Since the new distance depends only on the old distance, every automorphism of $\str A$ is an automorphism of $U(\str A)$, which proves~(\ref{main:2}). Lemma~\ref{lem:finite_palette}(\ref{palette:inside}) and~Lemma~\ref{lem:finite_palette}(\ref{palette:original}) give~(\ref{main:3}). Finally, by Lemma~\ref{lem:finite_palette}(\ref{palette:blocks}), every positive distance is replaced by one in the same block of $\Monoid$. Hence $\str A$ and $T(U(\str A))$ induce the same block equivalences on vertices, and~(\ref{main:4}) follows from Observation~\ref{obs:orders}.
\end{proof}
Note that it is even possible to ensure that $\sup(d_\str A(u,v))-d_{U(\str A)}(u,v)<\varepsilon$ for any fixed $\varepsilon > 0$ and all distinct $u,v\in A$.

Now, we are ready to prove Theorem~\ref{thm:main}.
\begin{proof}[Proof of Theorem~\ref{thm:main}]
Let $\str A$, $\str B$ be finite ordered $S$-metric spaces whose orders are convex with respect to block equivalences. By Observation~\ref{obs:orders}, $T(\str A)$ and $T(\str B)$ are convexly ordered $\Monoid$-metric spaces. Hence, by Theorem~\ref{thm:monoids}, there is a convexly ordered $\Monoid$-metric space $\str C'$ such that
$$\str C'\longrightarrow (T(\str B))^{T(\str A)}_2.$$
Let $\str C = U(\str C')$ be given by Proposition~\ref{prop:main}, with the same order. By Proposition~\ref{prop:main}(\ref{main:4}), $\str C$ is an $S$-metric space convexly ordered with respect to block equivalences.

Suppose that $c\colon \Emb(\str A,\str C)\to 2$ is a colouring. Every embedding $f\colon T(\str A)\to\str C'$ is also an embedding $f\colon\str A\to\str C$: all positive distances of $T(\str A)$ are original, and Proposition~\ref{prop:main}(\ref{main:3}) turns every original distance $\hat s$ in $\str C'$ into the distance $s$ in $\str C$. Thus $c$ restricts to a colouring of $\Emb(T(\str A),\str C')$. By the choice of $\str C'$, there is an embedding $e\colon T(\str B)\to\str C'$ such that this restricted colouring is constant on $e\circ\Emb(T(\str A),T(\str B))$. All positive distances in the image of $e$ are original, so the same map $e$ is also an embedding $\str B\to\str C$. Moreover, every embedding $g\colon\str A\to\str B$ is also an embedding $T(\str A)\to T(\str B)$. Hence $c$ is constant on $e\circ\Emb(\str A,\str B)$. This proves the Ramsey property. The expansion property follows by the standard argument (see e.g.~\cite[Theorem~5.17]{Konecny2018b}).
\end{proof}

\bibliographystyle{alpha}
\bibliography{ramsey}
\end{document}